\newtheorem{theorem}{Theorem}
\newtheorem{definition}{Definition}
\newtheorem{example}{Example}
\newtheorem{lemma}{Lemma}
\newtheorem{remark}{Remark}
\title{On  the solutions of a second-order  difference equations  in
terms of generalized Padovan sequences}
\author{ Yacine Halim$^{1}$ and Julius Fergy T. Rabago$^{2}$   \\
 $^{1}$ Department of Mathematics and computer sceince\\
Mila University Center, Mila, Algeria\\
Email: halyacine@yahoo.fr\\
 $^{2}$ Department of Mathematics and computer sciences,\\
   College of Science, University of the Philippines,\\
Gov. Pack Road, Baguio City 2600, Benguet, Philippines.\\
Email: jfrabago@gmail.com\\
}
\date{}
\begin{document}
\maketitle

\begin{abstract}
This paper deals with the solution, stability character and asymptotic behavior
of the rational difference equation
\begin{equation*}
x_{n+1}=\frac{\alpha x_{n-1}+\beta}{ \gamma x_{n}x_{n-1}},\qquad n
\in \mathbb{N}_{0},
\end{equation*}
where  $\mathbb{N}_{0}=\mathbb{N}\cup \left\{0\right\}$,
$\alpha,\beta,\gamma\in\mathbb{R}^{+}$, and the initial
conditions $x_{-1}$ and $x_{0}$ are non zero real numbers such
that their solutions are associated to generalized Padovan numbers.
Also, we investigate the two-dimensional case of the this equation given by
\begin{equation*}
	x_{n+1} = \frac{\alpha x_{n-1}  + \beta}{\gamma y_n x_{n-1}}, \qquad y_{n+1} = \frac{\alpha y_{n-1} +\beta}{\gamma x_n y_{n-1}} ,\qquad n\in \mathbb{N}_0,
\end{equation*}
and this generalizes the results presented in \cite{yazlik}.
\\
 \textbf{Keywords}: Difference equations,
general solution,  stability, generalized Padovan numbers.\\
\textbf{Mathematics Subject Classification:} 39A10, 40A05.

\end{abstract}
\section{Introduction and  preliminaries }
The term difference equation refers to a specific type of recurrence relation -- a mathematical relationship expressing $x_n$ as some combination of $x_i$ with $i<n$.
These equations usually appear as discrete mathematical models of many biological and environmental phenomena such as population growth and predator-prey interactions (see, e.g., \cite{book} and \cite{book2}),
and so these equations are studied because of their rich and complex dynamics.
Recently, the problem of finding closed-form solutions of rational difference equations and systems of rational of difference equations have gained considerable interest from many mathematicians.
In fact, countless papers have been published previously focusing on the aforementioned topic, see for example \cite{elsayed, elsayed0, elsayed1, khaliq, rabago1} and \cite{rabago2}.
Interestingly, some of the solution forms of these equations are even expressible in terms of well-known integer sequences such as the Fibonacci numbers, Horadam numbers and Padovan numbers (see, e.g., \cite{hal1,hal6,hal2,hal4,stv1, tollu, tollu1, tollu2, touafek,T5, yazlik}).

It is well-known that linear recurrences with constant coefficients, such as the recurrence relation $F_{n+1} = F_n + F_{n-1} $ defining the Fibonacci numbers, can be solved through various techniques (see, e.g.,  \cite{larcombe}).
Finding the closed-form solutions of nonlinear types of difference equations, however, are far more interesting and challenging compared to those of linear types.
In fact, as far as we know, there has no known general method to deal with different classes of difference equations solvable in closed-forms.
Nevertheless, numerous studies have recently dealt with finding appropriate techniques in solving closed-form solutions of some systems of difference equations (see, e.g., \cite{ brand,elsayed, elsayed0, elsayed1, hal5,stv2}).

Motivated by these aforementioned works, we investigate the rational difference equation
\begin{equation}
x_{n+1}=\frac{\alpha x_{n-1}+\beta}{ \gamma x_{n}x_{n-1}}, \quad n
\in \mathbb{N}_{0},.\label{eq0}
\end{equation}
Particularly, we seek to find its closed-form solution and examine the global stability of its positive solutions.
We establish the solution form of equation \eqref{eq0} using appropriate transformation reducing the equation into a linear type difference equation.
Also, we examine the solution form of the two-dimensional analogue of equation \eqref{eq0} given in the following more general form
\begin{equation}
	x_{n+1} = \frac{\alpha x_{n-1}  + \beta}{\gamma y_n x_{n-1}}, \qquad y_{n+1} = \frac{\alpha y_{n-1} +\beta}{\gamma x_n y_{n-1}} ,\qquad n\in \mathbb{N}_0.\label{eq01}
\end{equation}
The case $ \alpha=\beta=\gamma=1$ has been studied by Tollu, Yazlik and Taskara in \cite{yazlik}.
The authors in \cite{yazlik} established the solution form of system \eqref{eq01} (in the case $ \alpha=\beta=\gamma=1$) through induction principle.

The paper is organized as follows.
In the next section (Section \ref{sec2}), we review some definitions and important results necessary for the success of our study,
and this includes a brief discussion about generalized Padovan numbers.
In section \ref{sec3} and \ref{sec4}, we established the respective solution forms of equations \eqref{eq0} and the system \eqref{eq01}, and examine their respective stability properties.
Finally, we end our paper with a short summary in Section \ref{sec5}.

\section{Preliminaries}\label{sec2}
\subsection{Linearized stability of an equation }
Let $I$ be an interval of real numbers and let
$$F:I^{k+1}\longrightarrow I$$ be a continuously differentiable
function. Consider the difference equation
\begin{equation}
x_{n+1}=F(x_{n},x_{n-1},\ldots,x_{n-k})\label{eq2}
\end{equation}
with initial values $x_{0},x_{-1},\ldots \text{ }x_{-k}\in I$..
\begin{definition}
A point $\overline{x}\in I$ is called an equilibrium point of
equation\eqref{eq2} if
$$\overline{x}=F(\overline{x},\overline{x},\ldots,\overline{x}).$$
\end{definition}
\begin{definition} Let $\overline{x}$ be an equilibrium point of
equation\eqref{eq2}.

\begin{itemize}

\item[i)]The equilibrium $\overline{x}$ is called locally stable if for every
$\varepsilon > 0$, there exist $\delta > 0$ such that for
all$x_{-k},x_{-k+1},\ldots \text{ }x_{0}\in I$ with
$$|x_{-k}-\overline{x}|+|x_{-k+1}-\overline{x}|+\ldots+|x_{0}-\overline{x}|< \delta,$$ we have
$|x_{n}-\overline{x}|<\varepsilon$, for all $n\geq -k$.

\item[ii)] The equilibrium $\overline{x}$ is called locally
asymptotically stable if it is locally stable, and if there exists
$\gamma > 0$ such that if $x_{-1}, \text{ }x_{0}\in I$ and
$$|x_{-k}-\overline{x}|+|x_{-k+1}-\overline{x}|+\ldots+|x_{0}-\overline{x}|<  \gamma,$$
 then
$$\lim_{n\rightarrow +\infty} x_{n}=\overline{x}.$$

\item[iii)] The equilibrium $\overline{x}$ is called global attractor if
for all $x_{-k},x_{-k+1},\ldots \text{ }x_{0}\in I$, we have
$$\lim_{n\rightarrow +\infty} x_{n}=\overline{x}.$$

\item[iv)] The equilibrium $\overline{x}$ is called global
asymptotically stable if it is locally stable and a global
attractor.

\item[v)] The equilibrium $\overline{x}$ is called  unstable if it is
not stable.

\item[vi)] Let $p_{i}=\frac{\partial f}{\partial u_{i}}(\overline{x},\overline{x},\ldots,\overline{x}),\ i=0,1,\ldots,k$. Then, the equation
\begin{equation}
y_{n+1}=p_{0}y_{n}+p_{1}y_{n-1}+\ldots+p_{k}y_{n-k},\label{eq03}
\end{equation}
 is called the linearized equation of equation \eqref{eq2} about the equilibrium point
$\overline{x}$.
\end{itemize}
\end{definition}
The next result, which was given by Clark \cite{clk}, provides a
sufficient condition for the locally asymptotically stability of equation
\eqref{eq2}.
\begin{theorem}[\cite{clk}] Consider the difference
equation \eqref{eq03}. Let $p_{i}\in\mathbb{R}$, then,
$$|p_{0}|+|p_{1}|+\ldots+|p_{k}|<1$$ is a sufficient condition for the
locally asymptotically stability of equation \eqref{eq2}.\label{th6}
\end{theorem}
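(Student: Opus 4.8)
The plan is to combine the classical linearization principle for scalar higher-order difference equations with an elementary root-location estimate. Recall the Linearized Stability Theorem (Perron's theorem): if every root $\lambda$ of the characteristic polynomial
$$
P(\lambda)=\lambda^{k+1}-p_{0}\lambda^{k}-p_{1}\lambda^{k-1}-\cdots-p_{k}
$$
attached to the linearized equation \eqref{eq03} satisfies $|\lambda|<1$, then the equilibrium $\overline{x}$ of \eqref{eq2} is locally asymptotically stable. Equivalently, writing \eqref{eq03} as $Y_{n+1}=AY_{n}$ with $A$ the companion matrix of $P$, the condition is that the spectral radius $\rho(A)$ is strictly less than $1$; then $\|A^{n}\|\le C r^{n}$ for any $r\in(\rho(A),1)$, and controlling the remainder of the first-order Taylor expansion of $F$ about $(\overline{x},\dots,\overline{x})$ on a small enough neighborhood yields local asymptotic stability. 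Thus it suffices to show that the hypothesis $|p_{0}|+|p_{1}|+\cdots+|p_{k}|<1$ forces every root of $P$ into the open unit disk.

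For this, I would argue by contradiction. Suppose $\lambda_{0}$ is a root of $P$ with $|\lambda_{0}|\ge 1$. From $P(\lambda_{0})=0$ we have
$$
\lambda_{0}^{k+1}=p_{0}\lambda_{0}^{k}+p_{1}\lambda_{0}^{k-1}+\cdots+p_{k},
$$
and taking moduli, using the triangle inequality and the bound $|\lambda_{0}|^{k-j}\le|\lambda_{0}|^{k}$ for $0\le j\le k$ (valid since $|\lambda_{0}|\ge 1$), we get
$$
|\lambda_{0}|^{k+1}\le|p_{0}||\lambda_{0}|^{k}+|p_{1}||\lambda_{0}|^{k-1}+\cdots+|p_{k}|\le\bigl(|p_{0}|+|p_{1}|+\cdots+|p_{k}|\bigr)|\lambda_{0}|^{k}<|\lambda_{0}|^{k}.
$$
Dividing by $|\lambda_{0}|^{k}>0$ gives $|\lambda_{0}|<1$, contradicting $|\lambda_{0}|\ge 1$. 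Hence all roots of $P$ lie in $|\lambda|<1$, and the conclusion follows from the linearization principle. Alternatively, one could obtain the same root-location statement via Rouch\'e's theorem by comparing $z^{k+1}$ with $-(p_{0}z^{k}+\cdots+p_{k})$ on the circle $|z|=1$, where the latter has modulus at most $|p_{0}|+\cdots+|p_{k}|<1=|z^{k+1}|$, so that $P$ has exactly $k+1$ zeros (with multiplicity) inside the unit disk, i.e. all of them.

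I do not expect a genuine obstacle here: the root estimate is entirely elementary. The only point that needs care is the appeal to the linearization principle itself, namely that $\rho(A)<1$ controls $\|A^{n}\|$ even when $A$ has repeated eigenvalues and nontrivial Jordan blocks, and that the nonlinear remainder does not spoil the decay near the equilibrium; this is the content of Perron's theorem and is standard. Since Theorem~\ref{th6} is attributed to Clark \cite{clk}, in the body of the paper we simply cite it and apply the inequality $|p_{0}|+\cdots+|p_{k}|<1$ directly to the equations under study.
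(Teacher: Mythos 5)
The paper offers no proof of this theorem: it is quoted verbatim as a known result of Clark \cite{clk}, so there is no in-paper argument to compare yours against. Your proposal is nevertheless a correct and complete reconstruction of the standard proof. The two ingredients are exactly right: (i) the elementary root-location estimate — if $P(\lambda_{0})=0$ with $|\lambda_{0}|\ge 1$, then $|\lambda_{0}|^{k+1}\le\bigl(\sum_{i}|p_{i}|\bigr)|\lambda_{0}|^{k}<|\lambda_{0}|^{k}$, forcing $|\lambda_{0}|<1$, a contradiction (your Rouch\'e variant on $|z|=1$ is equally valid and is, incidentally, the same device the authors use later in their proof of local stability of $\overline{x}$); and (ii) the linearized stability (Perron) theorem, which converts the spectral condition $\rho(A)<1$ on the companion matrix into local asymptotic stability of the nonlinear equation \eqref{eq2}, using the $C^{1}$ hypothesis on $F$ to control the Taylor remainder. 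You correctly flag (ii) as the only non-elementary step and correctly note it is standard for $C^{1}$ maps even in the presence of nontrivial Jordan blocks. No gaps.
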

\subsection{Linearized stability of the second-order systems }
Let $f$ and $g$ be two continuously differentiable functions:
    \[
        f:\;I^{2}\times J^{2}\longrightarrow I,\quad g:\;I^{2}\times J^{2}\longrightarrow J, \quad I,J \subseteq \mathbb{R}
    \]
and for $n \in \mathbb{N}_{0}$, consider the system of difference
equations
\begin{equation}
    \left\{
    \begin{array}{ll}
        x_{n+1}= & f\left( x_{n},x_{n-1},y_{n},y_{n-1}\right)  \\
        y_{n+1}= & g\left( x_{n},x_{n-1},y_{n},y_{n-1}\right)
    \end{array}
    \right.\label{eq111}
\end{equation}
where $\left(x_{-1},x_{0}\right)$ $\in I^{2}$ and
$\left(y_{-1},y_{0}\right)$ $\in J^{2}$.
Define the map
    $
    H:\;I^{2}\times J^{2}\longrightarrow I^{2}\times J^{2}
    $
 by
    \[
        H(W)=(f_{0}(W),f_{1}(W),g_{0}(W),g_{1}(W))
    \]
where
$W   =(u_{0},u_{1},v_{0},v_{1})^{T}$,
$f_{0}(W)    =f(W)$, $f_{1}(W)=u_{0}$,
$g_{0}(W)    =g(W)$, $g_{1}(W)=v_{0}$.
Let $W_{n}=\left[x_{n},x_{n-1},y_{n},y_{n-1}\right]^{T}$.
Then, we can easily see that system \eqref{eq111} is equivalent to
the following system written in vector form
    \begin{equation}
        W_{n+1}=H(W_{n}),\;n=0,1,\ldots,\label{eq112}
    \end{equation}
that is
\begin{equation*}
\left\{
\begin{array}{rcl}
    x_{n+1} &= & f\left( x_{n},x_{n-1},y_{n},y_{n-1}\right)  \\
    x_{n}       &= & x_{n} \\
    y_{n+1} &= & g\left( x_{n},x_{n-1},y_{n},y_{n-1}\right)  \\
    y_{n}       &= & y_{n}

\end{array}
\right..
\end{equation*}

\begin{definition}[Equilibrium point]\ An equilibrium point $(\overline{x},\overline{y})\in I\times J$ of system \eqref{eq111} is a solution of the system
    \[
        \left\{\begin{array}{l}
            x=f\left(x,x,y,y\right),\\
            y=g\left(x,x,y,y\right).
    \end{array}\right.
    \]
Furthermore, an equilibrium point $\overline{W}\in I^{2}\times
J^{2}$ of system \eqref{eq112} is a solution of the system
    \[
        W=H(W).
    \]
\end{definition}

\begin{definition}[Stability] Let $\overline{W}$ be an equilibrium point of system \eqref{eq112} and $\parallel .\parallel$ be any norm (e.g. the Euclidean norm).
\begin{enumerate}
  \item The equilibrium point $\overline{W}$ is called stable (or locally stable) if for every $\varepsilon >0$ exist $\delta$ such that $\| W_{0}-\overline{W} \|<\delta$ implies $\| W_{n}-\overline{W}
  \|<\varepsilon$ for $n\geq0$.

   \item The equilibrium point $\overline{W}$ is called asymptotically stable (or locally asymptotically stable) if it is stable and there exist $\gamma >0$ such that $\| W_{0}-\overline{W} \|<\gamma$ implies
    \[
        \| W_{n}-\overline{W} \|\rightarrow0,\,n\rightarrow+\infty.
    \]
  \item The equilibrium point $\overline{W}$ is said to be global attractor (respectively global attractor with basin of attraction a set  $G\subseteq I^{2}\times J^{2}$, if for every $W_{0}$ (respectively for every  $W_{0}\in G$)
    \[
        \| W_{n}-\overline{W} \|\rightarrow0,\,n\rightarrow+\infty.
    \]
  \item The equilibrium point $\overline{W}$ is called globally asymptotically stable (respectively globally asymptotically stable relative to $G$) if it is asymptotically stable, and if for every $W_{0}$ (respectively for every $W_{0}\in G$),
    \[
        \| W_{n}-\overline{W}\|\rightarrow0,\,n\rightarrow+\infty.
    \]
  \item The equilibrium point  $\overline{W}$ is called unstable if it is not stable.
  \end{enumerate}
\end{definition}

\begin{remark}
    Clearly, $(\overline{x},\overline{y})\in I\times J$ is an equilibrium point for system \eqref{eq111} if and only if
    $\overline{W}=(\overline{x},\overline{x},,\overline{y},\overline{y},)\in I^{2}\times J^{2}$ is an equilibrium point of system \eqref{eq112}.
\end{remark}

From here on, by the stability of the equilibrium points of system
\eqref{eq111}, we mean the stability of the corresponding equilibrium
points of the equivalent system \eqref{eq112}.

\subsection{Generalized Padovan sequence}
The integer sequence defined by the recurrence relation
\begin{equation}
\mathcal{P}_{n+1}=\mathcal{P}_{n-1}+\mathcal{P}_{n-2},\quad n\in
\mathbb{N},
\end{equation}
with the initial conditions $\mathcal{P}_{-2}=0$, $\mathcal{P}_{-1}=0$, $\mathcal{P}_{0}=1$ (so $\mathcal{P}_{0} = \mathcal{P}_{1} = \mathcal{P}_{2}=1$), is known as the Padovan numbers and was named after Richard Padovan.
This is the same recurrence relation as for the {\it Perrin sequence}, but with different initial conditions ($P_0 = 3, P_1 =0 , P_2 =2$).
The first few terms of the recurrence sequence are $1, 1, 2, 2, 3, 4, 5, 7, 9, 12, \ldots$.
The Binet's formula for this recurrence sequence can easily be
obtained and is given by
\[\mathcal{P}_{n}=\frac{(\rho-1)(\overline{\rho}-1)}{(\sigma-\rho)(\sigma-\overline{\rho})}\sigma^{n}
+\frac{(\sigma-1)(\overline{\rho}-1)}{(\rho-\sigma)(\rho-\overline{\rho})}\rho^{n}+
\frac{(\sigma-1)(\rho-1)}{(\sigma-\overline{\rho})(\rho-\overline{\rho})}\overline{\rho}^{n}.\]
where $\sigma=\frac{r^{2}+12}{6r}$ (the so-called 'plastic number),
$\rho=-\frac{\sigma}{2}+i\frac{\sqrt{3}}{2}\left(\frac{r}{6}-\frac{2}{r}\right)$
and $r=\sqrt[3]{108+12\sqrt{69}}$.
The plastic number corresponds to the golden number $\frac{1+\sqrt{5}}{2}$ associated with the
equiangular spiral related to the conjoined squares in Fibonacci
numbers, that is,
\[\lim_{n\rightarrow\infty}\frac{\mathcal{P}_{n+1}}{\mathcal{P}_{n}}=\sigma.\]
For more informations associated with Padovan sequence, see \cite{pad1} and \cite{pad2}.\\
\newline Here we define an extension of the Padovan sequence in the following way
\begin{equation}
\mathcal{S}_{-2}=0, \quad \mathcal{S}_{-1}=0, \quad \mathcal{S}_{0}=1, \quad \mathcal{S}_{n+1}=p\mathcal{S}_{n-1}+q\mathcal{S}_{n-2},\quad n\in
\mathbb{N}.
\end{equation}
The Binet's formula for this recurrence sequence  is given by
\[\mathcal{S}_{n}=\frac{(\varphi-1)(\overline{\varphi}-1)}{(\phi-\varphi)(\phi-\overline{\varphi})}\phi^{n}
+\frac{(\phi-1)(\overline{\varphi}-1)}{(\varphi-\phi)(\varphi-\overline{\varphi})}\varphi^{n}+
\frac{(\phi-1)(\varphi-1)}{(\phi-\overline{\varphi})(\varphi-\overline{\varphi})}\overline{\varphi}^{n}.\]
where $\phi=\frac{R^{2}+12p}{6R}$,
$\varphi=-\frac{\phi}{2}+i\frac{\sqrt{3}}{2}\left(\frac{R}{6}-\frac{2p}{R}\right)$
and $R=\sqrt[3]{108q+12\sqrt{-12p^{3}+81q^{2}}}$.
One can easily verify that \[\lim_{n\rightarrow\infty}\frac{\mathcal{S}_{n+1}}{\mathcal{S}_{n}}=\phi.\]

\section{Closed-Form  solutions and stability  of equation \eqref{eq0} }\label{sec3}
For the rest of our discussion we assume $\mathcal{S}_{n}$, the $n$-th generalized Padovan number, to
satisfy the recurrence equation
\[ \mathcal{S}_{n+1}=p\mathcal{S}_{n-1}+q\mathcal{S}_{n-2}, \quad n\in
\mathbb{N}_0,\]
with initial conditions
$\mathcal{S}_{-2}=0$, $\mathcal{S}_{-1}=0$, $\mathcal{S}_{0}=1$.

\subsection{Closed-Form  solutions of equation \eqref{eq0}}
In this section, we derive the solution form of equation \eqref{eq0}
 through an analytical approach. We put  $q=\dfrac{\alpha}{\gamma}$ and $p=\dfrac{\beta}{\gamma}$,
hence we have the equation
\begin{equation}
x_{n+1}=\frac{p x_{n-1}+q}{  x_{n}x_{n-1}};\quad n \in
\mathbb{N}_{0}.\label{eq1}
\end{equation}
Consider the equivalent form of
equation \eqref{eq1}  given by
\[x_{n+1}=\frac{p}{x_{n}}+\frac{q}{x_{n}x_{n-1}}\]
which, upon the change of variable $x_{n+1}=z_{n+1}/z_{n}$, transforms into
\begin{equation}
z_{n+1}=pz_{n-1}+qz_{n-2}.\label{eq4}
\end{equation}
Now, we iterate the right hand
side of equation \eqref{eq4} as follows
\begin{eqnarray*}
z_{n+1}&=&pz_{n-1}+qz_{n-2}\\
&=&qz_{n-2}+p^{2}z_{n-3}+qpz_{n-4}\\
&=&p^{2}z_{n-3}+2pqz_{n-4}+q^{2}z_{n-5}\\
&=&2pqz_{n-4}+(p^{3}+q^{2})z_{n-5}+qp^{2}z_{n-6}\\
&=&(p^{3}+q^{2})z_{n-5}+3p^{2}qz_{n-6}+2pq^{2}z_{n-7}\\
&=&3p^{2}qz_{n-6}+(p^{4}+3pq^{2})z_{n-7}+(p^{3}+q^{3})z_{n-8}\\
&=&(p^{4}+3pq^{2})z_{n-7}+(q^{3}+4qp^{3})z_{n-8}+3p^{2}q^{2}z_{n-9}\\
&\vdots&\\
&=&\mathcal{S}_{n+1}z_{0}+\mathcal{S}_{n+2}z_{
-1}+\mathcal{S}_{n}qz_{-2}.
\end{eqnarray*}
Hence,
\begin{eqnarray*}
x_{n+1}=\frac{z_{n+1}}{z_{n}}&=&\frac{\mathcal{S}_{n+1}z_{0}+\mathcal{S}_{n+2}z_{
-1}+\mathcal{S}_{n}qz_{-2}}{\mathcal{S}_{n}z_{0}+\mathcal{S}_{n+1}z_{
-1}+\mathcal{S}_{n-1}qz_{-2}.}\\
&=&\frac{\mathcal{S}_{n+1}\frac{z_{0}}{z_{-1}}+\mathcal{S}_{n+2}+\mathcal{S}_{n-2}q\frac{z_{-2}}{z_{-1}}}{\mathcal{S}_{n}\frac{z_{0}}{z_{-1}}+\mathcal{S}_{n+1}
+\mathcal{S}_{n-1}q\frac{z_{-2}}{z_{-1}}}\\
&=&\frac{\mathcal{S}_{n+1}x_{0}+\mathcal{S}_{n+2}+\mathcal{S}_{n}q\frac{1}{x_{-1}}}{\mathcal{S}_{n}x_{0}+\mathcal{S}_{n+1}
+\mathcal{S}_{n-1}q\frac{1}{x_{-1}}}\\
&=&\frac{\mathcal{S}_{n+1}x_{0}x_{-1}+\mathcal{S}_{n+2}x_{-1}+\mathcal{S}_{n}q}{\mathcal{S}_{n}x_{0}x_{-1}+\mathcal{S}_{n+1}x_{-1}
+\mathcal{S}_{n-1}q}.
\end{eqnarray*}
The above computations prove the following result.
\begin{theorem}
Let $\left\{ x_{n}\right\} _{n\geq -1}$ be a solution of
\eqref{eq1}. Then, for $n=1,2,\ldots,$
\begin{equation}
x_{n}=\frac{\mathcal{S}_{n+1}x_{-1}+\mathcal{S}_{n}x_{0}x_{-1}+q\mathcal{S}_{n-1}}
{\mathcal{S}_{n}x_{-1}+\mathcal{S}_{n-1}x_{0}x_{-1}+q\mathcal{S}_{n-2}}.\label{eq11}
\end{equation}
where   the initial conditions $x_{-1},x_{0}\in \mathbb{R}-F$, with
$F$ is the Forbidden Set of equation \eqref{eq1} given by
\[F=\bigcup_{n=-1}^{\infty}\bigg\{(x_{-1},x_{0}):\mathcal{S}_{n}x_{-1}+\mathcal{S}_{n-1}x_{0}x_{-1}+q\mathcal{S}_{n-2}=0\bigg\}.\]\label{th1}
\end{theorem}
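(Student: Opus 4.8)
The plan is to linearise equation \eqref{eq1} by a Riccati-type substitution, solve the resulting linear recurrence in closed form, and then translate back. First I would rewrite \eqref{eq1} in the equivalent form $x_{n+1} = p/x_n + q/(x_n x_{n-1})$ and introduce a new sequence $\{z_n\}$ via $x_n = z_n/z_{n-1}$. Clearing denominators collapses the three consecutive ratios into the linear recurrence $z_{n+1} = p z_{n-1} + q z_{n-2}$, which is exactly equation \eqref{eq4}, valid for $n \in \mathbb{N}_0$. Since $\{z_n\}$ is determined only up to an overall nonzero multiplicative constant, I may normalise by $z_{-1} = 1$, which forces $z_0 = x_0$ and, from $x_{-1} = z_{-1}/z_{-2}$, gives $z_{-2} = 1/x_{-1}$ (this is where $x_{-1}\neq 0$ enters).

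Next I would prove, by induction on $m$, the closed form
\[
z_m = \mathcal{S}_m z_0 + \mathcal{S}_{m+1} z_{-1} + q\,\mathcal{S}_{m-1} z_{-2}, \qquad m \geq -1,
\]
for the generalised Padovan sequence $\mathcal{S}_m$ determined by $\mathcal{S}_{-2} = \mathcal{S}_{-1} = 0$, $\mathcal{S}_0 = 1$ and $\mathcal{S}_{k+1} = p\mathcal{S}_{k-1} + q\mathcal{S}_{k-2}$. The base cases $m = -1, 0, 1$ reduce to the identities $\mathcal{S}_{-1} = \mathcal{S}_1 = 0$, $\mathcal{S}_0 = 1$, $\mathcal{S}_2 = p$ together with $z_1 = p z_{-1} + q z_{-2}$; for the inductive step (for $m\geq 2$), substituting the formula for $z_{m-2}$ and $z_{m-3}$ into $z_m = p z_{m-2} + q z_{m-3}$ and collecting the coefficients of $z_0$, $z_{-1}$, $z_{-2}$ produces exactly $p\mathcal{S}_{m-2} + q\mathcal{S}_{m-3} = \mathcal{S}_m$, $p\mathcal{S}_{m-1} + q\mathcal{S}_{m-2} = \mathcal{S}_{m+1}$ and $q(p\mathcal{S}_{m-3} + q\mathcal{S}_{m-4}) = q\mathcal{S}_{m-1}$, each by the recurrence defining $\mathcal{S}$. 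The chain of equalities displayed just before the statement is precisely the unwinding of this induction.

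Then I would substitute back. Because $x_n = z_n/z_{n-1}$, applying the closed form at indices $n$ and $n-1$ gives
\[
x_n = \frac{\mathcal{S}_n z_0 + \mathcal{S}_{n+1} z_{-1} + q\mathcal{S}_{n-1} z_{-2}}{\mathcal{S}_{n-1} z_0 + \mathcal{S}_n z_{-1} + q\mathcal{S}_{n-2} z_{-2}},
\]
and inserting $z_0 = x_0$, $z_{-1} = 1$, $z_{-2} = 1/x_{-1}$ and multiplying numerator and denominator by $x_{-1}$ yields \eqref{eq11}. One point must be verified along the way: the substitution $x_n = z_n/z_{n-1}$ is legitimate only while $z_{n-1}\neq 0$. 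From the closed form, $z_{n-1} = \mathcal{S}_{n-1} x_0 + \mathcal{S}_n + q\mathcal{S}_{n-2}/x_{-1}$, so multiplying by the nonzero factor $x_{-1}$ shows $z_{n-1} = 0$ is equivalent to $\mathcal{S}_n x_{-1} + \mathcal{S}_{n-1} x_0 x_{-1} + q\mathcal{S}_{n-2} = 0$; hence $(x_{-1},x_0)\notin F$ is exactly what keeps every denominator, and thus the substitution, valid, and one notes that the $n=0$ and $n=1$ members of the union defining $F$ merely re-encode $x_{-1}\neq 0$ and $x_0\neq 0$.

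There is no deep obstacle once the substitution is in hand; the real care is bookkeeping. Because $z_{n+1} = p z_{n-1} + q z_{n-2}$ skips an index, one must track which Padovan term multiplies which of $z_0$, $z_{-1}$, $z_{-2}$ and check enough base cases (three suffice), and one must correctly match the three degrees of freedom $z_{-2},z_{-1},z_0$ to the two data $x_{-1},x_0$ through the normalisation. The only genuinely delicate issue is guaranteeing the divisions never break down, which is precisely what the forbidden-set description in the statement records; everything else is the routine algebra already sketched in the excerpt.
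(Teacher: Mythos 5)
Your proposal is correct and follows essentially the same route as the paper: the substitution $x_n=z_n/z_{n-1}$ reducing \eqref{eq1} to $z_{n+1}=pz_{n-1}+qz_{n-2}$, the closed form $z_m=\mathcal{S}_m z_0+\mathcal{S}_{m+1}z_{-1}+q\mathcal{S}_{m-1}z_{-2}$, and back-substitution with $z_{-1}=1$, $z_0=x_0$, $z_{-2}=1/x_{-1}$. The only difference is that you justify the closed form by an explicit induction (and track the non-vanishing of the $z_n$ against the forbidden set), whereas the paper merely iterates the recurrence and reads off the pattern; your version is the more rigorous rendering of the same argument.
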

If $\alpha=\beta=\gamma$, then from \eqref{eq11} we get
\begin{equation*}
x_{n}=\frac{\mathcal{P}_{n+1}x_{-1}+\mathcal{P}_{n}x_{0}x_{-1}+q\mathcal{P}_{n-1}}
{\mathcal{P}_{n}x_{-1}+\mathcal{P}_{n-1}x_{0}x_{-1}+q\mathcal{P}_{n-2}}.\label{eq12}
\end{equation*}
Hence, for $\alpha=\beta=\gamma$ we have $\mathcal{S}_{n}=\mathcal{P}_{n}, n\in \mathbb{N}$, and consequently
we get the solution given in \cite{yazlik}.

\subsection{Global stability of solutions of equation \eqref{eq0}}
In this section we study the global stability character of the
solutions of equation \eqref{eq1}.
It is easy to show that eqref{eq1} has a unique positive
equilibrium point given by $\overline{x}=\phi$.
Let $I=(0,+\infty)$, and consider the function $f:\;I^{2}\longrightarrow I$ defined by
\[f(x,y)=\frac{py+q}{xy}.\]

\begin{theorem}
The equilibrium point $\overline{x}$ is locally asymptotically
stable.\label{th3}
\end{theorem}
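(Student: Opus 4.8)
The plan is to linearize equation~\eqref{eq1} about its positive equilibrium $\overline{x}=\phi$ and to show that both characteristic roots of the resulting linear equation lie in the open unit disk. First I would record the identity $\overline{x}^{3}=p\overline{x}+q$, which holds because $\overline{x}=f(\overline{x},\overline{x})=(p\overline{x}+q)/\overline{x}^{2}$ (equivalently, $\phi$ is by construction the dominant root of the characteristic polynomial $t^{3}-pt-q$ of \eqref{eq4}). This identity is what makes the derivative computations collapse.

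Next I would compute the partial derivatives of $f(x,y)=\dfrac{py+q}{xy}=\dfrac{p}{x}+\dfrac{q}{xy}$ at $(\overline{x},\overline{x})$. A short computation gives
\[
p_{0}=\frac{\partial f}{\partial x}(\overline{x},\overline{x})=-\frac{p\overline{x}+q}{\overline{x}^{3}}=-1,
\qquad
p_{1}=\frac{\partial f}{\partial y}(\overline{x},\overline{x})=-\frac{q}{\overline{x}^{3}},
\]
so the linearized equation of \eqref{eq1} about $\overline{x}$ is $y_{n+1}+y_{n}+\mu\,y_{n-1}=0$, where $\mu:=q/\overline{x}^{3}$, with characteristic polynomial $P(\lambda)=\lambda^{2}+\lambda+\mu$.

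Because $p_{0}=-1$, Clark's criterion (Theorem~\ref{th6}) is \emph{not} directly applicable here, since $|p_{0}|+|p_{1}|=1+\mu>1$; instead I would locate the two roots of $P$ explicitly. Using $\overline{x}^{3}=p\overline{x}+q$ together with $p,\overline{x}>0$ one gets $0<\mu=q/(p\overline{x}+q)<1$. If $1-4\mu<0$, the roots of $P$ are complex conjugates of modulus $\sqrt{\mu}<1$; if $1-4\mu\ge 0$, they are the real numbers $\lambda_{\pm}=\tfrac12\bigl(-1\pm\sqrt{1-4\mu}\bigr)$, and since $0<\sqrt{1-4\mu}<1$ both of them lie in $(-1,0)$. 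In every case $P$ has both zeros strictly inside the unit disk, and hence $\overline{x}$ is locally asymptotically stable. (Alternatively one may invoke the standard root-location lemma for quadratics: the zeros of $\lambda^{2}+b\lambda+c$ lie in the open unit disk if and only if $|b|<1+c$ and $|c|<1$, then verify $|1|<1+\mu$ and $|\mu|<1$.)

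The only genuine subtlety is this last step: the crude $\ell^{1}$ bound of Theorem~\ref{th6} fails on the boundary because $p_{0}=-1$, so the argument must rest on the exact Schur--Cohn condition for quadratics (or the explicit root computation above) rather than on Clark's sufficient condition; the remainder is a routine differentiation that simplifies through $\overline{x}^{3}=p\overline{x}+q$.
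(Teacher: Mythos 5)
Your proof is correct, and it takes a genuinely different --- and in fact more careful --- route than the paper's. The paper computes $t_{1}=\frac{\partial f}{\partial x}(\overline{x},\overline{x})$ and $t_{2}=\frac{\partial f}{\partial y}(\overline{x},\overline{x})$ in terms of the radical $R$ and then asserts a bound amounting to $|t_{1}|+|t_{2}|<1$, which it feeds into Rouch\'e's theorem. But, as you observe, the equilibrium relation $\overline{x}^{3}=p\overline{x}+q$ forces $t_{1}=-(p\overline{x}+q)/\overline{x}^{3}=-1$ exactly, so $|t_{1}|+|t_{2}|=1+q/\overline{x}^{3}>1$: the paper's inequality is false, Clark's criterion (Theorem \ref{th6}) is inapplicable, and the strict comparison $|b(\lambda)|<|a(\lambda)|$ needed for Rouch\'e fails at $\lambda=1$, where $|b(1)|=1+\mu$. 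Your replacement --- computing $\mu=q/\overline{x}^{3}\in(0,1)$ and locating the roots of $\lambda^{2}+\lambda+\mu$ explicitly, or equivalently checking the Schur--Cohn/Jury conditions $|b|<1+c$ and $|c|<1$ --- is exactly what is needed to rescue the statement: the conclusion is true, but only because the precise root configuration is favorable in a way the crude triangle-inequality bound cannot detect. The one point to make explicit in a final write-up is that, once both characteristic roots are shown to lie strictly inside the unit disk, local asymptotic stability follows from the standard linearized-stability theorem for hyperbolic fixed points rather than from Theorem \ref{th6} as stated (whose hypothesis you have just shown to fail); with that citation in place, your argument is complete.
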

\begin{proof}
The linearized equation of equation \eqref{eq1} about the
equilibrium $\overline{x}$ is
\[y_{n+1}=t_{1}y_{n}+t_{2}y_{n-1}\]
where
\[t_{1}=\frac{\partial f}{\partial x}(\overline{x},\overline{x})=-\frac{pR^{2}+12p^{2}+6qR}{R^{6}+pR^{2}+12p^{2}+\frac{48p^{3}}{R^{2}}}\]
and
\[t_{2}=\frac{\partial f}{\partial y}(\overline{x},\overline{x})=-\frac{6qR}{R^{6}+pR^{2}+12p^{2}+\frac{48p^{3}}{R^{2}}}\]
 and the
characteristic polynomial is
\[\lambda^{2}+t_{1}\lambda+t_{2}=0.\]
Consider the two functions defined by \[a(\lambda)=\lambda^{2},\quad
b(\lambda)=-(t_{1}\lambda+t_{2}).\] We have
\[\left|\frac{pR^{2}+12p^{2}+12qR}{R^{6}+pR^{2}+12p^{2}+\frac{48p^{3}}{R^{2}}}\right|<1.\]
Then
\begin{equation*}
\left\vert b(\lambda )\right\vert < \left\vert a(\lambda
)\right\vert, \quad \forall \lambda :\left\vert \lambda \right\vert = 1
\end{equation*}
Thus, by Rouche's theorem, all zeros of $P(\lambda
)=a(\lambda)-b(\lambda)=0$ lie in $|\lambda |<1$. So, by Theorem
\eqref{th6} we get that $\overline{x}$ is locally asymptotically stable.

\end{proof}

\begin{theorem}
The equilibrium point $\overline{x}$ is globally asymptotically stable.
\end{theorem}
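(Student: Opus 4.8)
The plan is to bypass any direct invariant-interval or Lyapunov-function analysis of the map $f(x,y)=(py+q)/(xy)$ and instead read the limiting behaviour of every positive solution straight off the closed form \eqref{eq11}, using the known asymptotics of the generalized Padovan sequence. Since Theorem~\ref{th3} already gives local asymptotic stability of $\overline{x}=\phi$, it remains only to show that $\overline{x}$ is a global attractor on $I=(0,\infty)$; that is, $x_n\to\phi$ for every choice of initial data $x_{-1},x_0>0$.

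First I would establish two auxiliary facts. (i) For $p,q>0$ one has $\mathcal{S}_0=1$, $\mathcal{S}_1=0$, and $\mathcal{S}_n>0$ for all $n\geq 2$: indeed $\mathcal{S}_2=p$, $\mathcal{S}_3=q$, $\mathcal{S}_4=p^2$, $\mathcal{S}_5=2pq$ are positive, and the recurrence $\mathcal{S}_{n+1}=p\mathcal{S}_{n-1}+q\mathcal{S}_{n-2}$ then propagates positivity by induction. Consequently, for $x_{-1},x_0>0$ the denominator $\mathcal{S}_nx_{-1}+\mathcal{S}_{n-1}x_0x_{-1}+q\mathcal{S}_{n-2}$ occurring in \eqref{eq11} is, for every $n\geq 2$, a sum of nonnegative terms at least one of which is strictly positive (and it equals $x_{-1}$ when $n=0$ and $x_0x_{-1}$ when $n=1$); hence the forbidden set $F$ is disjoint from $(0,\infty)^2$ and the solution is well defined and positive for all $n$. (ii) From $\lim_{n\to\infty}\mathcal{S}_{n+1}/\mathcal{S}_n=\phi$ (recorded in Section~\ref{sec2}) together with $\mathcal{S}_n\neq 0$ for large $n$ and $\phi>0$, it follows that $\mathcal{S}_{n+k}/\mathcal{S}_n\to\phi^{k}$ for every fixed integer $k$; in particular $\mathcal{S}_{n+1}/\mathcal{S}_{n-1}\to\phi^{2}$, $\mathcal{S}_n/\mathcal{S}_{n-1}\to\phi$ and $\mathcal{S}_{n-2}/\mathcal{S}_{n-1}\to\phi^{-1}$.

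Then I would divide numerator and denominator of \eqref{eq11} by $\mathcal{S}_{n-1}$ and pass to the limit, which by (ii) gives
\[
\lim_{n\to\infty}x_n=\frac{\phi^{2}x_{-1}+\phi x_0x_{-1}+q}{\phi x_{-1}+x_0x_{-1}+q\phi^{-1}}.
\]
Writing $A:=\phi x_{-1}+x_0x_{-1}>0$, the numerator equals $\phi A+q$ and the denominator equals $A+q\phi^{-1}=\phi^{-1}(\phi A+q)$, so the right-hand side collapses to $\phi=\overline{x}$. Since this limit is independent of $x_{-1},x_0>0$, the equilibrium $\overline{x}$ is a global attractor, and together with Theorem~\ref{th3} this shows that $\overline{x}$ is globally asymptotically stable.

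The steps requiring care are purely of a bookkeeping nature: one must confirm that the ratio limits in (ii) are legitimate — this reduces to $\phi$ being the strictly dominant root of $\lambda^{3}=p\lambda+q$, equivalently $|\varphi|^{2}=q/\phi<\phi^{2}$, which follows at once from $\phi^{3}=p\phi+q>q$ — and one must know that the denominators in \eqref{eq11} stay bounded away from $0$ so that the limit of the quotient is the quotient of the limits; both are furnished by (i). I do not anticipate any genuine obstacle beyond these verifications, since the explicit solution reduces global attractivity to a one-line algebraic identity.
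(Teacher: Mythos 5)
Your proof is correct and follows essentially the same route as the paper: read off the limit from the closed form of Theorem~\ref{th1}, divide through by a generalized Padovan term, and use $\mathcal{S}_{n+1}/\mathcal{S}_{n}\to\phi$ to collapse the quotient to $\phi$ (the paper divides by $\mathcal{S}_{n}$ and eliminates $\mathcal{S}_{n-2}$ via the recurrence, you divide by $\mathcal{S}_{n-1}$ and use $\mathcal{S}_{n+k}/\mathcal{S}_{n}\to\phi^{k}$ --- a cosmetic difference). Your added verifications --- positivity of $\mathcal{S}_n$, avoidance of the forbidden set for positive initial data, and the dominance of $\phi$ via $\phi^{3}=p\phi+q>q$ --- are welcome details that the paper simply asserts.
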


\begin{proof}
Let $\left\{x_{n}\right\} _{n\geq -k}$ be a solution of equation
\eqref{eq1}. By Theorem \eqref{th3} we need only to prove that $E$
is global attractor, that is
$$\displaystyle\lim_{n\rightarrow\infty}x_{n}=\phi.$$
 it follows from Theorem \eqref{th1} that

\begin{eqnarray*}
\lim_{n\rightarrow\infty}x_{n}&=&\lim_{n\rightarrow\infty}\frac{\mathcal{S}_{n+1}x_{-1}+
\mathcal{S}_{n}x_{0}x_{-1}+q\mathcal{S}_{n-1}}{\mathcal{S}_{n}x_{-1}+\mathcal{S}_{n-1}x_{0}x_{-1}+q\mathcal{S}_{n-2}}\\
&=&\lim_{n\rightarrow\infty}\frac{\mathcal{S}_{n}\left(\frac{\mathcal{S}_{n+1}}{\mathcal{S}_{n}}x_{-1}+
x_{0}x_{-1}+q\frac{\mathcal{S}_{n-1}}{\mathcal{S}_{n}}\right)}{\mathcal{S}_{n}
\left(x_{-1}+\frac{\mathcal{S}_{n-1}}{\mathcal{S}_{n}}x_{0}x_{-1}+q\frac{\mathcal{S}_{n-2}}{\mathcal{S}_{n}}\right)}\\
&=&\lim_{n\rightarrow\infty}\frac{\frac{\mathcal{S}_{n+1}}{\mathcal{S}_{n}}x_{-1}+
x_{0}x_{-1}+q\frac{\mathcal{S}_{n-1}}{\mathcal{S}_{n}}}{
x_{-1}+\frac{\mathcal{S}_{n-1}}{\mathcal{S}_{n}}x_{0}x_{-1}+q\frac{\left(\frac{1}{q}\mathcal{S}_{n+1}-\frac{p}{q}\mathcal{S}_{n-1}\right)}{\mathcal{S}_{n}}}\\
&=&\lim_{n\rightarrow\infty}\frac{\phi\left( x_{-1}+
\frac{1}{\phi}x_{0}x_{-1}+q\frac{1}{\phi}\right)}{
x_{-1}+\frac{1}{\phi}x_{0}x_{-1}+\phi+\frac{p}{\phi}}
\end{eqnarray*}
Then
$$\displaystyle\lim_{n\rightarrow\infty}x_{n}=\phi.$$

\end{proof}

\begin{example} For confirming results of this section, we consider the following numerical example.
Let $\alpha=2, \beta= 5$ and $\gamma=4$ in \eqref{eq0}, then we obtain the equation
\begin{equation}
x_{n+1}=\frac{2x_{n-1}+5}{ 4 x_{n}x_{n-1}}.\label{eq80}
\end{equation}
Assume $x_{-1}=3$ and  $x_{0}=0.2$, (see Fig.
\ref{iresp1}).\label{ex1}
\begin{figure}[ht]
\centerline{\epsfig{figure=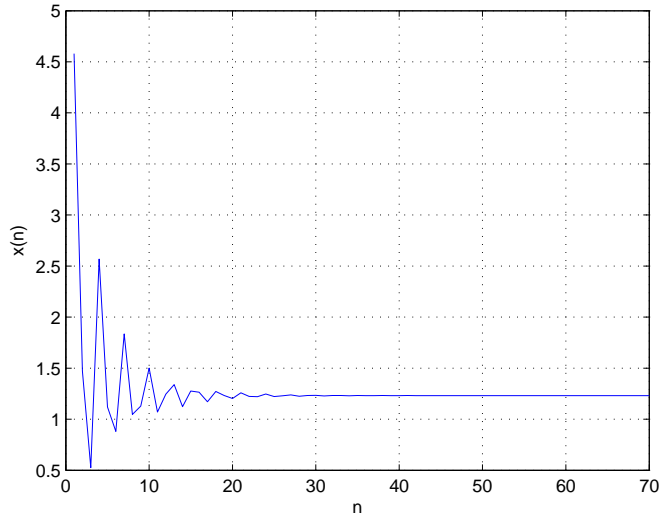, width=100mm}} \caption{This
figure shows that the solution of the equation \eqref{eq80} is global
attractor, that is,
$\displaystyle\lim_{n\rightarrow\infty}x_{n}=\phi.$}
\label{iresp1}
\end{figure}
\end{example}

\section{Closed-form and stability of solutions of system \eqref{eq01}}\label{sec4}

\subsection{Closed-form solutions of system \eqref{eq01}}
In this section, we derive the respective solution form of system \eqref{eq01}. We put  $q=\dfrac{\alpha}{\gamma}$ and $p=\dfrac{\beta}{\gamma}$.
Hence, we have the system
\begin{equation}
	x_{n+1} = \frac{p x_{n-1}  + q}{ y_n x_{n-1}}, \qquad y_{n+1} = \frac{p y_{n-1} +q}{ x_n y_{n-1}} ,\qquad n\in \mathbb{N}_0\label{eq010}
\end{equation}

The following theorem describes the form of the solutions of system \eqref{eq010}.
\begin{theorem}
Let $\left\{ x_{n},y_{n}\right\} _{n\geq -1}$ be a solution of
\eqref{eq010}. Then for $n=1,2,\ldots,$

\begin{equation}
x_{n}=\left\{
\begin{array}{ll}
\dfrac{\mathcal{S}_{n+1}y_{-1}+\mathcal{S}_{n}x_{0}y_{-1}+q\mathcal{S}_{n-1}}
{\mathcal{S}_{n}y_{-1}+\mathcal{S}_{n-1}x_{0}y_{-1}+q\mathcal{S}_{n-2}},& if\ n\ is\ even,\\[1em]
\dfrac{\mathcal{S}_{n+1}x_{-1}+\mathcal{S}_{n}y_{0}x_{-1}+q\mathcal{S}_{n-1}}
{\mathcal{S}_{n}x_{-1}+\mathcal{S}_{n-1}y_{0}x_{-1}+q\mathcal{S}_{n-2}},& if\ n\ is\ odd,
\end{array}
\right.\\
\end{equation}
\begin{equation}
y_{n}=\left\{
\begin{array}{ll}
\dfrac{\mathcal{S}_{n+1}x_{-1}+\mathcal{S}_{n}y_{0}x_{-1}+q\mathcal{S}_{n-1}}
{\mathcal{S}_{n}x_{-1}+\mathcal{S}_{n-1}y_{0}x_{-1}+q\mathcal{S}_{n-2}},& if\ n\ is\ even,\\[1em]
\dfrac{\mathcal{S}_{n+1}y_{-1}+\mathcal{S}_{n}x_{0}y_{-1}+q\mathcal{S}_{n-1}}
{\mathcal{S}_{n}y_{-1}+\mathcal{S}_{n-1}x_{0}y_{-1}+q\mathcal{S}_{n-2}},& if\ n\ is\ odd,
\end{array}
\right.
\end{equation}
where   the initial conditions $x_{-1}$, $x_{0}$, $y_{-1}$ and $y_{0}\in \mathbb{R}\setminus\left(F_{1}\cup F_{2}\right)$, with
$F_{1}$ and $F_{2}$ are the forbidden sets of equation \eqref{eq1} given by
\[F_{1}=\bigcup_{n=-1}^{\infty}\bigg\{(x_{-1},x_{0},y_{-1},y_{0}):\mathcal{S}_{n}x_{-1}+\mathcal{S}_{n-1}y_{0}x_{-1}+q\mathcal{S}_{n-2}=0\bigg\},\]
and
\[F_{2}=\bigcup_{n=-1}^{\infty}\bigg\{(x_{-1},x_{0},y_{-1},y_{0}):\mathcal{S}_{n}y_{-1}+\mathcal{S}_{n-1}x_{0}y_{-1}+q\mathcal{S}_{n-2}=0\bigg\}.\]
\label{th12}
\end{theorem}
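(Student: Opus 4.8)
The plan is to reduce system \eqref{eq010} to the scalar equation \eqref{eq1} by interleaving the two component sequences, and then to apply Theorem \ref{th1}. Given a solution $\{x_n,y_n\}_{n\geq-1}$ of \eqref{eq010}, I would introduce, for $n\geq-1$, the two interleaved sequences
\[
w_n=\begin{cases} x_n,& n\text{ odd},\\ y_n,& n\text{ even},\end{cases}
\qquad\text{and}\qquad
v_n=\begin{cases} y_n,& n\text{ odd},\\ x_n,& n\text{ even},\end{cases}
\]
so that $(w_{-1},w_0)=(x_{-1},y_0)$ and $(v_{-1},v_0)=(y_{-1},x_0)$, and so that $\{w_n\}$ recovers $x_n$ at odd indices and $y_n$ at even indices, while $\{v_n\}$ does the opposite.

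The key step is to verify that both $\{w_n\}_{n\geq-1}$ and $\{v_n\}_{n\geq-1}$ satisfy the scalar recurrence \eqref{eq1}. For $\{w_n\}$ one splits according to the parity of $n\geq0$: if $n$ is even, then $n+1$ and $n-1$ are odd while $n$ is even, so the first equation of \eqref{eq010} yields $w_{n+1}=x_{n+1}=\dfrac{p x_{n-1}+q}{y_n x_{n-1}}=\dfrac{p w_{n-1}+q}{w_n w_{n-1}}$; if $n$ is odd, then $n+1$ and $n-1$ are even while $n$ is odd, so the second equation of \eqref{eq010} yields $w_{n+1}=y_{n+1}=\dfrac{p y_{n-1}+q}{x_n y_{n-1}}=\dfrac{p w_{n-1}+q}{w_n w_{n-1}}$. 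In both cases $w_{n+1}=\dfrac{p w_{n-1}+q}{w_n w_{n-1}}$, i.e. $\{w_n\}$ solves \eqref{eq1} with initial data $(x_{-1},y_0)$. The same parity argument, with the two equations of \eqref{eq010} exchanged, shows that $\{v_n\}$ solves \eqref{eq1} with initial data $(y_{-1},x_0)$; alternatively this follows at once from the $x\leftrightarrow y$ symmetry of \eqref{eq010}.

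Having established this, I would apply Theorem \ref{th1} to each of $\{w_n\}$ and $\{v_n\}$, obtaining for $n=1,2,\ldots$
\[
w_n=\frac{\mathcal{S}_{n+1}x_{-1}+\mathcal{S}_{n}y_0x_{-1}+q\mathcal{S}_{n-1}}{\mathcal{S}_{n}x_{-1}+\mathcal{S}_{n-1}y_0x_{-1}+q\mathcal{S}_{n-2}},
\qquad
v_n=\frac{\mathcal{S}_{n+1}y_{-1}+\mathcal{S}_{n}x_0y_{-1}+q\mathcal{S}_{n-1}}{\mathcal{S}_{n}y_{-1}+\mathcal{S}_{n-1}x_0y_{-1}+q\mathcal{S}_{n-2}},
\]
valid precisely when $(x_{-1},y_0)$, resp. $(y_{-1},x_0)$, avoids the forbidden set $F$ of \eqref{eq1}; these two exclusions are exactly the sets $F_1$ and $F_2$ of the statement. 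Reading off $x_n=w_n$, $y_n=v_n$ for odd $n$ and $x_n=v_n$, $y_n=w_n$ for even $n$ then gives the four displayed formulas, completing the proof. The only genuinely creative point is the choice of interleaving; once it is in place, the argument consists only of an elementary parity case-split and an invocation of Theorem \ref{th1}, so I do not expect a serious obstacle — the one detail to handle carefully is matching the forbidden sets $F_1$ and $F_2$ with the instances of $F$ that arise for $\{w_n\}$ and $\{v_n\}$ (including the implicit non-vanishing conditions on the initial data).
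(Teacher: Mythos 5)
Your proposal is correct, and it takes a genuinely different route from the paper. The paper proves Theorem \ref{th12} by strong induction on $n$: assuming the closed forms for $x_{2n-2},x_{2n-1},y_{2n-2},y_{2n-1}$, it substitutes them into system \eqref{eq010} and simplifies the resulting compound fractions (using $\mathcal{S}_{m+1}=p\mathcal{S}_{m-1}+q\mathcal{S}_{m-2}$) to obtain the formulas at the next step, in four separate computations. Your argument instead observes that the two interleaved sequences $w_n$ and $v_n$ each satisfy the scalar equation \eqref{eq1} with initial data $(x_{-1},y_0)$ and $(y_{-1},x_0)$ respectively — I checked the parity case-split and it is right, e.g.\ for $n$ even the first equation of \eqref{eq010} gives $w_{n+1}=x_{n+1}=(pw_{n-1}+q)/(w_nw_{n-1})$ since $n\pm1$ are odd — so Theorem \ref{th1} applies verbatim to each and the four displayed formulas are read off by parity, with $F_1$ and $F_2$ being exactly the two instances of the scalar forbidden set $F$. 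This decoupling is cleaner and more conceptual: it explains \emph{why} the system's solution is the scalar solution with swapped initial data, it replaces four fraction-simplification computations with one two-line verification, and it would generalize immediately to other symmetric two-dimensional analogues of a solvable scalar equation. The paper's induction, by contrast, is self-contained (it does not lean on Theorem \ref{th1}) and verifies the formulas directly against the system. The one point worth making explicit in a final write-up is the bookkeeping you already flag: the set $F$ in Theorem \ref{th1} is indexed by the pair of initial conditions of the scalar equation, so for $\{w_n\}$ it is the condition $\mathcal{S}_{n}x_{-1}+\mathcal{S}_{n-1}y_{0}x_{-1}+q\mathcal{S}_{n-2}\neq0$ (i.e.\ $F_1$) and for $\{v_n\}$ the condition defining $F_2$; with that stated, the proof is complete.
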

\begin{proof}
The closed-form solution of \eqref{eq010} can be established through a similar approach we used in proving the one-dimensional case.
However, for convenience, we shall prove the theorem by induction.
For the basis step, we have
\[x_{1}=\frac{px_{-1}+q}{y_{0}x_{-1}} \quad \text{and}\quad  y_{1}=\frac{py_{-1}+q}{x_{0}y_{-1}},\]
so the result clearly holds for $n=0$.  Suppose that $n>0$ and that our
assumption holds for $n-1$. That is,

\begin{align*}
x_{2n-2} &=\dfrac{\mathcal{S}_{2n-1}y_{-1}+\mathcal{S}_{2n-2}x_{0}y_{-1}+q\mathcal{S}_{2n-3}}
{\mathcal{S}_{2n-2}y_{-1}+\mathcal{S}_{2n-3}x_{0}y_{-1}+q\mathcal{S}_{2n-4}},\\
x_{2n-1} &=\dfrac{\mathcal{S}_{2n}x_{-1}+\mathcal{S}_{2n-1}y_{0}x_{-1}+q\mathcal{S}_{2n-2}}
{\mathcal{S}_{2n-1}x_{-1}+\mathcal{S}_{2n-2}y_{0}x_{-1}+q\mathcal{S}_{2n-3}},\\
y_{2n-2} &=\dfrac{\mathcal{S}_{2n-1}x_{-1}+\mathcal{S}_{2n-2}y_{0}c_{-1}+q\mathcal{S}_{2n-3}}
{\mathcal{S}_{2n-2}x_{-1}+\mathcal{S}_{2n-3}y_{0}x_{-1}+q\mathcal{S}_{2n-4}},\\
y_{2n-1} &=\dfrac{\mathcal{S}_{2n}y_{-1}+\mathcal{S}_{2n-1}x_{0}y_{-1}+q\mathcal{S}_{2n-2}}
{\mathcal{S}_{2n-1}y_{-1}+\mathcal{S}_{2n-2}x_{0}y_{-1}+q\mathcal{S}_{2n-3}}.\\
\end{align*}
Now it follows from  system \eqref{eq010} that%
\begin{align*}
x_{2n} &=\dfrac{px_{2n-2}+q}{y_{2n-1}x_{2n-2}}\\
&=\dfrac{p\dfrac{\mathcal{S}_{2n-1}y_{-1}+\mathcal{S}_{2n-2}x_{0}y_{-1}+q\mathcal{S}_{2n-3}}
{\mathcal{S}_{2n-2}y_{-1}+\mathcal{S}_{2n-3}x_{0}y_{-1}+q\mathcal{S}_{2n-4}}+q}{\dfrac{\mathcal{S}_{2n}y_{-1}+\mathcal{S}_{2n-1}x_{0}y_{-1}+q\mathcal{S}_{2n-2}}
{\mathcal{S}_{2n-1}y_{-1}+\mathcal{S}_{2n-2}x_{0}y_{-1}+q\mathcal{S}_{2n-3}}\dfrac{\mathcal{S}_{2n-1}y_{-1}+\mathcal{S}_{2n-2}x_{0}y_{-1}+q\mathcal{S}_{2n-3}}
{\mathcal{S}_{2n-2}y_{-1}+\mathcal{S}_{2n-3}x_{0}y_{-1}+q\mathcal{S}_{2n-4}}}\\
&=\dfrac{p(\mathcal{S}_{2n-1}y_{-1}+\mathcal{S}_{2n-2}x_{0}y_{-1}+q\mathcal{S}_{2n-3})+q
(\mathcal{S}_{2n-2}y_{-1}+\mathcal{S}_{2n-3}x_{0}y_{-1}+q\mathcal{S}_{2n-4})}{\mathcal{S}_{2n}y_{-1}+\mathcal{S}_{2n-1}x_{0}y_{-1}+q\mathcal{S}_{2n-2}}
\end{align*}
So, we have
\[x_{2n}=\dfrac{\mathcal{S}_{2n+1}y_{-1}+\mathcal{S}_{2n}x_{0}y_{-1}+q\mathcal{S}_{2n-1}}
{\mathcal{S}_{2n}y_{-1}+\mathcal{S}_{2n-1}x_{0}y_{-1}+q\mathcal{S}_{2n-2}}.\]
Also it follows from system \eqref{eq010} that
\begin{align*}
y_{2n} &=\dfrac{py_{2n-2}+q}{x_{2n-1}y_{2n-2}}\\
&=\dfrac{p\dfrac{\mathcal{S}_{2n-1}x_{-1}+\mathcal{S}_{2n-2}y_{0}x_{-1}+q\mathcal{S}_{2n-3}}
{\mathcal{S}_{2n-2}x_{-1}+\mathcal{S}_{2n-3}y_{0}x_{-1}+q\mathcal{S}_{2n-4}}+q}
{\dfrac{\mathcal{S}_{2n}x_{-1}+\mathcal{S}_{2n-1}y_{0}x_{-1}+q\mathcal{S}_{2n-2}}
{\mathcal{S}_{2n-1}x_{-1}+\mathcal{S}_{2n-2}y_{0}x_{-1}+q\mathcal{S}_{2n-3}}
\dfrac{\mathcal{S}_{2n-1}x_{-1}+\mathcal{S}_{2n-2}y_{0}c_{-1}+q\mathcal{S}_{2n-3}}
{\mathcal{S}_{2n-2}x_{-1}+\mathcal{S}_{2n-3}y_{0}x_{-1}+q\mathcal{S}_{2n-4}}}\\
&=\dfrac{p(\mathcal{S}_{2n-1}x_{-1}+\mathcal{S}_{2n-2}y_{0}x_{-1}+q\mathcal{S}_{2n-3})+
q(\mathcal{S}_{2n-2}x_{-1}+\mathcal{S}_{2n-3}y_{0}x_{-1}+q\mathcal{S}_{2n-4})}
{\mathcal{S}_{2n}x_{-1}+\mathcal{S}_{2n-1}y_{0}x_{-1}+q\mathcal{S}_{2n-2}}.
\end{align*}
Hence, we have
\[y_{2n}=\dfrac{\mathcal{S}_{2n+1}x_{-1}+\mathcal{S}_{2n}y_{0}c_{-1}+q\mathcal{S}_{2n-1}}
{\mathcal{S}_{2n}x_{-1}+\mathcal{S}_{2n-1}y_{0}x_{-1}+q\mathcal{S}_{2n-2}}.\]
Using the same argument it follows from system \eqref{eq010} that
\begin{align*}
x_{2n+1} &=\dfrac{px_{2n-1}+q}{y_{2n}x_{2n-1}}\\
&=\dfrac{p\dfrac{\mathcal{S}_{2n}x_{-1}+\mathcal{S}_{2n-1}y_{0}x_{-1}+q\mathcal{S}_{2n-2}}
{\mathcal{S}_{2n-1}x_{-1}+\mathcal{S}_{2n-2}y_{0}x_{-1}+q\mathcal{S}_{2n-3}}+q}
{\dfrac{\mathcal{S}_{2n+1}x_{-1}+\mathcal{S}_{2n}y_{0}x_{-1}+q\mathcal{S}_{2n-1}}
{\mathcal{S}_{2n}x_{-1}+\mathcal{S}_{2n-1}y_{0}x_{-1}+q\mathcal{S}_{2n-2}}
\dfrac{\mathcal{S}_{2n}x_{-1}+\mathcal{S}_{2n-1}y_{0}x_{-1}+q\mathcal{S}_{2n-2}}
{\mathcal{S}_{2n-1}x_{-1}+\mathcal{S}_{2n-2}y_{0}x_{-1}+q\mathcal{S}_{2n-3}}}\\
&=\dfrac{p(\mathcal{S}_{2n}x_{-1}+\mathcal{S}_{2n-1}y_{0}x_{-1}+q\mathcal{S}_{2n-2})+
q(\mathcal{S}_{2n-1}x_{-1}+\mathcal{S}_{2n-2}y_{0}x_{-1}+q\mathcal{S}_{2n-3})}
{\mathcal{S}_{2n+1}x_{-1}+\mathcal{S}_{2n}y_{0}x_{-1}+q\mathcal{S}_{2n-1}}.
\end{align*}%
This  yields
\[x_{2n+1}=\dfrac{\mathcal{S}_{2n+2}x_{-1}+\mathcal{S}_{2n+1}y_{0}x_{-1}+q\mathcal{S}_{2n}}
{\mathcal{S}_{2n+1}x_{-1}+\mathcal{S}_{2n}y_{0}c_{-1}+q\mathcal{S}_{2n-1}}.\]
Moreover, we have
\begin{align*}
y_{2n+1} &=\dfrac{py_{2n-1}+q}{x_{2n}y_{2n-1}}\\
&=\dfrac{p\dfrac{\mathcal{S}_{2n}y_{-1}+\mathcal{S}_{2n-1}x_{0}y_{-1}+q\mathcal{S}_{2n-2}}
{\mathcal{S}_{2n-1}y_{-1}+\mathcal{S}_{2n-2}x_{0}y_{-1}+q\mathcal{S}_{2n-3}}+q}
{\dfrac{\mathcal{S}_{2n+1}y_{-1}+\mathcal{S}_{2n}x_{0}y_{-1}+q\mathcal{S}_{2n-1}}
{\mathcal{S}_{2n}y_{-1}+\mathcal{S}_{2n-1}x_{0}y_{-1}+q\mathcal{S}_{2n-2}}
\dfrac{\mathcal{S}_{2n}y_{-1}+\mathcal{S}_{2n-1}x_{0}y_{-1}+q\mathcal{S}_{2n-2}}
{\mathcal{S}_{2n-1}y_{-1}+\mathcal{S}_{2n-2}x_{0}y_{-1}+q\mathcal{S}_{2n-3}}}\\
&=\dfrac{p(\mathcal{S}_{2n}y_{-1}+\mathcal{S}_{2n-1}x_{0}y_{-1}+q\mathcal{S}_{2n-2})+
q(\mathcal{S}_{2n-1}y_{-1}+\mathcal{S}_{2n-2}x_{0}y_{-1}+q\mathcal{S}_{2n-3})}
{\mathcal{S}_{2n+1}y_{-1}+\mathcal{S}_{2n}x_{0}y_{-1}+q\mathcal{S}_{2n-1}},
\end{align*}%
and this implies that
\[y_{2n+1}=\dfrac{\mathcal{S}_{2n+2}y_{-1}+\mathcal{S}_{2n+1}x_{0}y_{-1}+q\mathcal{S}_{2n}}
{\mathcal{S}_{2n+1}y_{-1}+\mathcal{S}_{2n}x_{0}y_{-1}+q\mathcal{S}_{2n-1}}.\]
This completes the proof of the theorem.
\end{proof}

\subsection{Global attractor of  solutions of system \eqref{eq01}}
Our aim in this section is to study the asymptotic behavior of
positive solutions of system \eqref{eq010}. Let $I=J=(0,+\infty)$,
and consider the functions
\[f:\;I^{2}\times
J^{2}\longrightarrow I \quad \text{and}\quad  g:\;I^{2}\times J^{2} \longrightarrow J\] defined by
\[f(u_{0},u_{1},v_{0},v_{1})=\frac{pu_{1}+q}{v_{0}u_{1}} \quad \text{and}\quad
g(u_{0},u_{1},v_{0},v_{1})=\frac{pv_{1}+q}{u_{0}v_{1}},\]
respectively.
\begin{lemma} System \eqref{eq1} has a unique equilibrium point in $I\times
J$, namely
$$E=\left(\dfrac{R^{2}+12p}{6R},\dfrac{R^{2}+12p}{6R}\right).$$
\end{lemma}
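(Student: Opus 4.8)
The plan is to solve the equilibrium equations of system \eqref{eq010} directly and then identify the resulting positive root with the plastic–type number $\phi=(R^{2}+12p)/(6R)$. An equilibrium $(\overline{x},\overline{y})\in I\times J$ must satisfy
\[\overline{x}=\frac{p\overline{x}+q}{\overline{y}\,\overline{x}},\qquad \overline{y}=\frac{p\overline{y}+q}{\overline{x}\,\overline{y}},\]
and, after clearing denominators (legitimate since $\overline{x},\overline{y}>0$), this becomes the polynomial system $\overline{x}^{2}\overline{y}=p\overline{x}+q$ and $\overline{x}\,\overline{y}^{2}=p\overline{y}+q$.

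First I would subtract the two equations to get $\overline{x}\,\overline{y}(\overline{x}-\overline{y})=p(\overline{x}-\overline{y})$, so either $\overline{x}=\overline{y}$ or $\overline{x}\,\overline{y}=p$. In the second case, rewriting the first polynomial equation gives $p\overline{x}=\overline{x}(\overline{x}\,\overline{y})=p\overline{x}+q$, forcing $q=0$; since $q=\beta/\gamma>0$ this is impossible. Hence $\overline{x}=\overline{y}$, and the system collapses to the single cubic $\overline{x}^{3}-p\overline{x}-q=0$, that is, $\overline{x}$ is a root of the characteristic polynomial of the generalized Padovan recurrence $\mathcal{S}_{n+1}=p\mathcal{S}_{n-1}+q\mathcal{S}_{n-2}$.

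Next I would establish uniqueness of the positive root. Setting $h(t)=t^{3}-pt-q$, we have $h(0)=-q<0$ and $h(t)\to+\infty$ as $t\to+\infty$, so a positive root exists; moreover $h'(t)=3t^{2}-p$ vanishes only at $t=\sqrt{p/3}>0$, so $h$ decreases on $(0,\sqrt{p/3})$ and increases on $(\sqrt{p/3},+\infty)$. Since $h$ starts at the negative value $-q$ and only increases past $\sqrt{p/3}$, it crosses zero at most once on $(0,+\infty)$, so the positive root is unique. Finally I would verify that this unique root equals $\phi=R/6+2p/R$: a direct expansion yields $\phi^{3}-p\phi=R^{3}/216+8p^{3}/R^{3}$, which equals $q$ precisely when $R^{6}-216qR^{3}+1728p^{3}=0$, i.e. when $R^{3}=108q\pm 12\sqrt{81q^{2}-12p^{3}}$, and this is exactly the defining relation $R=\sqrt[3]{108q+12\sqrt{-12p^{3}+81q^{2}}}$. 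Therefore $\overline{x}=\overline{y}=\phi$, so $E=(\phi,\phi)$ is the unique equilibrium of system \eqref{eq010} in $I\times J$.

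I expect the last step — checking that Cardano's formula, in the form written by the authors, reproduces $(R^{2}+12p)/(6R)$ — to be the only delicate computation; one must also keep track of the branch of the cube root so that $R$, and hence $\phi$, is real and positive, which is automatic when $81q^{2}\ge 12p^{3}$ and otherwise follows from the casus irreducibilis together with the uniqueness argument above.
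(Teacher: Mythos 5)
Your proof is correct, and it follows the only natural route: the paper's own ``proof'' of this lemma simply writes down the equilibrium system and declares the unique solution to be $E=(\phi,\phi)$ with no further argument. Your version supplies exactly the details the paper omits --- the dichotomy $\overline{x}=\overline{y}$ or $\overline{x}\,\overline{y}=p$ (the latter ruled out since $q>0$), the monotonicity argument showing $t^{3}-pt-q$ has a unique positive root, and the verification that $(R^{2}+12p)/(6R)$ is that root via the relation $R^{6}-216qR^{3}+1728p^{3}=0$ --- and all of these computations check out.
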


\begin{proof} Clearly the system
$$\overline{x}=\frac{p\overline{x}+ q}{\overline{x}\overline{y}},\quad\overline{y}=\frac{p\overline{y}+q}{\overline{y}\overline{x}},$$
has a unique solution in $I^{2}\times J^{2}$ which is
$$E=\left(\dfrac{R^{2}+12p}{6R},\dfrac{R^{2}+12p}{6R}\right).$$

\end{proof}

\begin{theorem}
The equilibrium point $E$ is global attractor.
\end{theorem}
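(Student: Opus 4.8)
The plan is to bypass linearized stability entirely and argue directly from the closed-form solution of Theorem \ref{th12}, imitating the computation already carried out for equation \eqref{eq1}. Concretely, it suffices to show that every positive solution $\{(x_n,y_n)\}_{n\ge -1}$ of \eqref{eq010} satisfies $\lim_{n\to\infty}x_n=\lim_{n\to\infty}y_n=\phi$, where $\phi=(R^2+12p)/(6R)$ is the common coordinate of $E$; this is precisely the assertion that $E$ is a global attractor.

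First I would write down the four expressions for $x_{2n}$, $x_{2n+1}$, $y_{2n}$, $y_{2n+1}$ furnished by Theorem \ref{th12}. Since $p,q>0$ and $\mathcal{S}_0=1$, one checks that $\mathcal{S}_m>0$ for every $m\ge 2$, so for large $n$ I may divide the numerator and denominator of each ratio by $\mathcal{S}_m$, with $m=2n$ or $m=2n+1$ as appropriate. Then I would pass to the limit using $\lim_{m\to\infty}\mathcal{S}_{m+1}/\mathcal{S}_m=\phi$ (recorded in Section \ref{sec2}), together with the consequences $\lim_{m\to\infty}\mathcal{S}_{m-1}/\mathcal{S}_m=1/\phi$ and $\lim_{m\to\infty}\mathcal{S}_{m-2}/\mathcal{S}_m=1/\phi^2$. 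The mildly awkward terms $q\mathcal{S}_{m-1}/\mathcal{S}_m$ (in the numerators) and $q\mathcal{S}_{m-2}/\mathcal{S}_m$ (in the denominators) are handled by substituting the defining recurrence $q\mathcal{S}_{m-2}=\mathcal{S}_{m+1}-p\mathcal{S}_{m-1}$, exactly as in the one-dimensional global stability argument. In each of the four cases the numerator then turns out to be $\phi$ times the (strictly positive, hence nonzero) limit of the denominator, so each ratio tends to $\phi$.

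Because the even- and odd-indexed subsequences of $\{x_n\}$ both converge to $\phi$, we conclude $x_n\to\phi$; the same reasoning gives $y_n\to\phi$. Hence $(x_n,y_n)\to E$ for every positive solution, which establishes the claim.

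The computation itself is routine; the only places to be careful are the bookkeeping of the two parities across the four closed forms and the verification that the denominators stay bounded away from $0$ along each subsequence — both of which follow from the positivity of $\mathcal{S}_m$ for $m\ge 2$ and of the initial data. The one substantive step, which I would double-check, is the algebraic reduction of each limit to $\phi$ via the recurrence for $\mathcal{S}_m$; it is the direct analogue of the displayed computation for equation \eqref{eq1}, so no new idea is required.
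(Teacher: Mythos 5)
Your proposal is correct and follows essentially the same route as the paper: both pass to the closed forms of Theorem \ref{th12}, divide through by $\mathcal{S}_n$, use $\mathcal{S}_{n+1}/\mathcal{S}_n\to\phi$ together with the recurrence $q\mathcal{S}_{n-2}=\mathcal{S}_{n+1}-p\mathcal{S}_{n-1}$, and conclude that the even- and odd-indexed subsequences of $x_n$ and $y_n$ all tend to $\phi$ (the final cancellation resting on $\phi^3=p\phi+q$). No substantive difference from the paper's argument.
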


\begin{proof}
Let $\left\{x_{n},y_{n}\right\} _{n\geq 0}$ be a solution of system
\eqref{eq1}.
Let $n\rightarrow\infty$ in Theorem \ref{th12}. That is, we have
\begin{align*}
\lim_{n\rightarrow\infty}x_{2n}&=\lim_{n\rightarrow\infty}\frac{\mathcal{S}_{n+1}y_{-1}+
\mathcal{S}_{n}x_{0}y_{-1}+q\mathcal{S}_{n-1}}{\mathcal{S}_{n}y_{-1}+\mathcal{S}_{n-1}x_{0}y_{-1}+q\mathcal{S}_{n-2}}\\
&=\lim_{n\rightarrow\infty}\frac{\mathcal{S}_{n}\left(\frac{\mathcal{S}_{n+1}}{\mathcal{S}_{n}}y_{-1}+
x_{0}y_{-1}+q\frac{\mathcal{S}_{n-1}}{\mathcal{S}_{n}}\right)}{\mathcal{S}_{n}
\left(y_{-1}+\frac{\mathcal{S}_{n-1}}{\mathcal{S}_{n}}x_{0}y_{-1}+q\frac{\mathcal{S}_{n-2}}{\mathcal{S}_{n}}\right)}\\
&=\lim_{n\rightarrow\infty}\frac{\frac{\mathcal{S}_{n+1}}{\mathcal{S}_{n}}y_{-1}+
x_{0}y_{-1}+q\frac{\mathcal{S}_{n-1}}{\mathcal{S}_{n}}}{
y_{-1}+\frac{\mathcal{S}_{n-1}}{\mathcal{S}_{n}}x_{0}y_{-1}+q\frac{\left(\frac{1}{q}\mathcal{S}_{n+1}-\frac{p}{q}\mathcal{S}_{n-1}\right)}{\mathcal{S}_{n}}}\\
&=\frac{\phi\left( y_{-1}+
\frac{1}{\phi}x_{0}y_{-1}+q\frac{1}{\phi}\right)}{
y_{-1}+\frac{1}{\phi}x_{0}y_{-1}+\phi+\frac{p}{\phi}}=\phi.
\end{align*}
and
\begin{align*}
\lim_{n\rightarrow\infty}x_{2n+1}&=\lim_{n\rightarrow\infty}\frac{\mathcal{S}_{n+1}x_{-1}+
\mathcal{S}_{n}y_{0}x_{-1}+q\mathcal{S}_{n-1}}{\mathcal{S}_{n}x_{-1}+\mathcal{S}_{n-1}y_{0}x_{-1}+q\mathcal{S}_{n-2}}\\
&=\lim_{n\rightarrow\infty}\frac{\mathcal{S}_{n}\left(\frac{\mathcal{S}_{n+1}}{\mathcal{S}_{n}}x_{-1}+
y_{0}x_{-1}+q\frac{\mathcal{S}_{n-1}}{\mathcal{S}_{n}}\right)}{\mathcal{S}_{n}
\left(x_{-1}+\frac{\mathcal{S}_{n-1}}{\mathcal{S}_{n}}y_{0}x_{-1}+q\frac{\mathcal{S}_{n-2}}{\mathcal{S}_{n}}\right)}\\
&=\lim_{n\rightarrow\infty}\frac{\frac{\mathcal{S}_{n+1}}{\mathcal{S}_{n}}x_{-1}+
y_{0}x_{-1}+q\frac{\mathcal{S}_{n-1}}{\mathcal{S}_{n}}}{
x_{-1}+\frac{\mathcal{S}_{n-1}}{\mathcal{S}_{n}}y_{0}x_{-1}+q\frac{\left(\frac{1}{q}\mathcal{S}_{n+1}-\frac{p}{q}\mathcal{S}_{n-1}\right)}{\mathcal{S}_{n}}}\\
&=\frac{\phi\left( x_{-1}+
\frac{1}{\phi}y_{0}x_{-1}+q\frac{1}{\phi}\right)}{
x_{-1}+\frac{1}{\phi}y_{0}x_{-1}+\phi+\frac{p}{\phi}}=\phi.
\end{align*}
Then
$\displaystyle\lim_{n\rightarrow\infty}x_{n}=\phi$.
Similarly, we obtain
$\displaystyle\lim_{n\rightarrow\infty}y_{n}=\phi$.
Thus, we have
\[\displaystyle\lim_{n\rightarrow\infty}(x_{n},y_{n})=E.\]
\end{proof}

\begin{example} As an illustration of our results, we consider the following numerical
example. Let $\alpha=2, \beta=3$ and $\gamma=5$ in system \eqref{eq01}, then we obtain the
system
\begin{equation}
	x_{n+1} = \frac{2x_{n-1}  + 3}{5 y_n x_{n-1}}, \qquad y_{n+1} = \frac{2 y_{n-1} +3}{ 5x_n y_{n-1}} ,\qquad n\in \mathbb{N}_0\label{eq011}
\end{equation}
Assume $x_{-1}=1.2, x_{0}=3.6, y_{-1}=2.3$  and $y_{0}=0.8.$ \emph{(See Fig.
\ref{iresp2})}.\label{ex4}
\begin{figure}[ht]
\centerline{\epsfig{figure=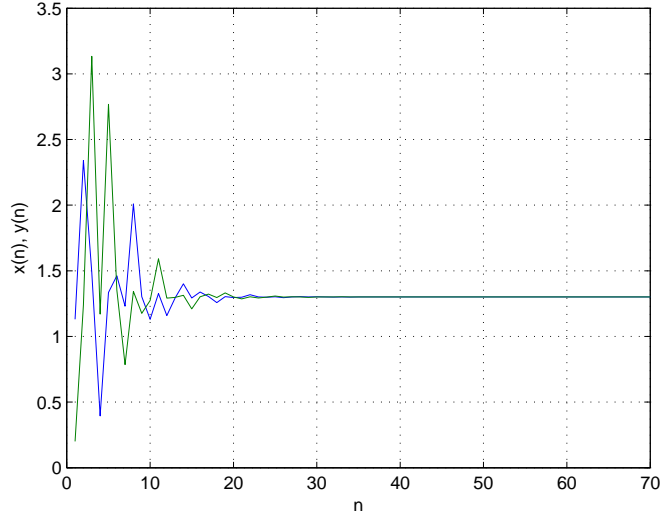, width=100mm}} \caption{This
figure shows that the solution of the system \eqref{eq011} is global
attractor, that is
$\displaystyle\lim_{n\rightarrow\infty}x_{n}=E.$}
\label{iresp2}
\end{figure}
\end{example}

\section{Summary and Recommendations}\label{sec5}

In this work, we have successfully established the closed-form solution of the rational difference equation
\begin{equation*}
x_{n+1}=\frac{\alpha x_{n-1}+\beta}{ \gamma x_{n}x_{n-1}}
\end{equation*}
as well as the closed-form solutions of its corresponding two-dimensional case
\begin{equation*}
	x_{n+1} = \frac{\alpha x_{n-1}  + \beta}{\gamma y_n x_{n-1}}, \qquad y_{n+1} = \frac{\alpha y_{n-1} +\beta}{\gamma x_n y_{n-1}}.
\end{equation*}
Also, we obtained stability results for the positive solutions of these systems.
Particularly, we have shown that the positive solutions of each of these equations tends to a computable finite number, and is in fact expressible in terms of the well-known plastic number.
Meanwhile, for future investigation, one could also derive the closed-form solution and examine the stability of solutions of the system
\begin{equation*}
	x_{n+1} = \frac{\alpha x_{n-1}  - \beta}{\gamma y_n x_{n-1}}, \qquad y_{n+1} = \frac{\alpha y_{n-1}  \pm \beta}{\gamma x_n y_{n-1}}.
\end{equation*}
This work we leave to the interested readers.

\end{document}